\begin{document}

\title{Bigraded Poincar\'e polynomials and the equivariant cohomology of Rep($C_2$)-complexes}
\author{Eric Hogle}

\abstract{We are interested in computing the Bredon cohomology with coefficients in the constant Mackey functor $\underline{ \mathbb{F}_2}$ for equivariant $\text{Rep}(C_2)$ spaces, in particular for Grassmannian manifolds of the form $\Gr_k(V)$ where $V$ is some real representation of $C_2$.\\

It is possible to create multiple distinct $\text{Rep}(C_2)$ constructions of (and hence multiple filtration spectral sequences for) a given Grassmannian. For sufficiently small examples one may exhaustively compute all possible outcomes of each spectral sequence and determine if there exists a unique common answer. However, the complexity of such a computation quickly balloons in time and memory requirements.\\

We introduce a statistic on $\Mt$-modules valued in the polynomial ring $\Z[x,y]$ which makes cohomology computation of Rep($C_2$)-complexes more tractable, and we present some new results for Grassmannians.}

\maketitle

\tableofcontents


\section{Introduction} 

The theory of $C_2$-equivariant spaces and spectra has been a growing area of research especially since the resolution of the Kervaire invariant one problem in \cite{HHR}. Computations of the Bredon cohomology of these objects using the constant Mackey functor $\underline{\mathbb{F}_2}$ are nontrivial, but inroads have been made, as by \cite{dL}, \cite{CH1} and  \cite{Clover}. This theory continues to prove fruitful, as in \cite{BW}, \cite{dFO} and \cite{SP}.

In particular, work on equivariant vector bundles and Grassmannians has been done as by \cite{Dan}, \cite{NG}, \cite{CH2} and \cite{K} but much work remains. While the cohomology of infinite-dimensional Grassmannians is solved in \cite{Dan} and the author of this paper worked out a two families of finite-dimensional spaces in \cite{Me}, in general the Bredon cohomology of finite-dimensional Grassmannians $\Gr_k(\R^{p,q})$ is unknown. 

Equivariant filtration spectral sequences can be built for these spaces  using a Rep($C_2$)-complex construction (see Example \ref{ex:schubert} in this paper and Example 4.1 in \cite{Dan}), but actually running the sequences is quite difficult. Prior to the results below, given an $E_1$ page and a candidate answer, even determining whether such an answer was \emph{possible} required an exhaustive investigation of all combinations of differentials that could be supported and the ``Kronholm shifts.'' of these hypothetical differentials. This brute-force approach is slow. 

We present here an invariant which makes answering the question ``could this $E_1$ converge to that module'' fast and easy. With this tool in hand a number of new cohomologies are calculated. 


\section{Preliminaries}

The group $C_2$ has two real representations, the trivial representation $\R^+$ and the sign representation $\R^-$. The $RO(C_2)$-graded Bredon cohomology for $C_2$-spaces is therefore bigraded. We used the motivic grading convention in which $\R^{p,q}$ denotes $(\R^+)^{\oplus(p-q)}\oplus (\R^-)^{\oplus q}$, so that the two numbers indicate underlying topological dimension and the number of dimensions with nontrivial action. We may also wish to specify a chosen order to a direct sum, for example writing $\R^{-+-}$ to mean $\R^-\oplus\R^+\oplus\R^-$. The one-point compactification of these representations are the spheres $S^{p,q}=\widehat{\R^{p,q}}$.

In this paper we abbreviate singular cohomology with $\mathbb{F}_2$ coefficients of a space $X$ by $H^\ast(X)$, and for an equivariant space $X$, write $H^{\ast,\ast}(X):=H^{\ast,\ast}(X;\underline{ \mathbb{F}_2})$. The cohomologies of equivariant spaces in this theory are modules over the cohomology of a point, shown in Figure \ref{M2}.

\begin{figure}[h]
	\begin{center}
	\begin{tikzpicture}[scale=0.4]
	\draw[gray] (-3.5,0) -- (4.5,0) node[right,black] {\small $p$};
	\draw[gray] (0,-4.5) -- (0,4.5) node[above,black] {\small $q$};
	\foreach \x in {-3,...,-1,1,2,...,4}
		\draw [font=\tiny, gray] (\x cm,2pt) -- (\x cm,-2pt) node[anchor=north] {$\x$};
	\foreach \y in {-4,-3,-2,-1,2,3,4}
		\draw [font=\tiny, gray] (2pt,\y cm) -- (-2pt,\y cm) node[anchor=east] {$\y$};
	
	\foreach \y in {0,...,4}
		\fill (0,\y) circle(2pt);
	\foreach \y in {1,...,4}
		\fill (1,\y) circle(2pt);
	\foreach \y in {2,...,4}
		\fill (2,\y) circle(2pt);
	\foreach \y in {3,...,4}
		\fill (3,\y) circle(2pt);
	\foreach \y in {4,...,4}
		\fill (4,\y) circle(2pt);
	
	\foreach \y in {0,...,2}
		\fill (0,-\y-2) circle(2pt);
	\foreach \y in {1,...,2}
		\fill (-1,-\y-2) circle(2pt);
	\foreach \y in {2,...,2}
		\fill (-2,-\y-2) circle(2pt);
	
	\draw[thick,->] (0,0) -- (4.5,4.5);
	\draw[thick,->] (0,0) -- (0,4.5);
	\draw[thick,->] (0,-2) -- (0,-4.5);
	\draw[thick,->] (0,-2) -- (-2.5,-4.5);

	\draw (0,0) node[left] {$1$};
	\draw (1,1) node[right] {$\rho$};
	\draw (0,1) node[left] {$\tau$};
	\draw (0,-2) node[right] {$\theta$};
	\draw (-.9,-3.1) node[above left] {$\frac{\theta}{\rho}$};
	\draw (0,-3) node[right] {$\frac{\theta}{\tau}$};
\end{tikzpicture} 
\caption{$\Mt := H^{*,*}(\text{pt})$}
\label{M2}
\end{center}
\end{figure}

A $\text{Rep}(C_2)$-complex generalizes CW complexes using discs about the origin in representations. We denote these $D^{p,q}=D(\R^{p,q})$. The cohomology of a $\text{Rep}(C_2)$-complex can in principle be computed using a filtration spectral sequence whose first page is a free $\Mt$ module made up of one shifted copy of $\Mt$ corresponding to each cell. Although subsequent pages of the spectral sequence need not be free, we show in \cite{Us} that the cohomology of such spaces is always a free $\Mt$-module of the form $\bigoplus_i\Sigma^{a_i,b_i}\Mt$. We do this inductively, showing that if a $\text{Rep}(C_2)$ complex has a new cell attached, the resulting space may have some of its free summands `shifted' up in the sign grading, while the new summand corresponding to the new cell has shifted down. These are called Kronholm shifts after their first appearance in \cite{K}. See Example \ref{ex:rp2tw}.

Rather than displaying every copy of $\mathbb{F}_2$ in each bidegree, we will restrict ourselves to depicting the $\Mt$-generators with dots as in Figure \ref{fig:basicshift} below\footnote{Ignoring all of $\Mt$ besides the generator makes things easier to represent, but notice for example that while $E_1^{2,0}$ appears empty, $\Sigma^{2,2}\theta$ lives there, which is why a differential from $\Sigma^{1,0}\Mt$ is possible.} or with rank tables giving the number of generators in each bidegree, as in Figure \ref{fig:242}.\\

\begin{example} 
\label{ex:rp2tw}
A simple (and in some sense the fundamental) example is when $\R P^2_{\text{tw}}=\Gr_1(\R^{3,1})$ is built from a Rep-CW structure by identifying $\R^{3,1}$ as $\R^{++-}$. In computing the cohomology using this filtration spectral sequence, we see a Kronholm shift from $E_1=\Mt\oplus \Sigma^{1,0}\Mt\oplus \Sigma^{2,2}\Mt$ to the cohomology $H^{\ast,\ast}(\R P^2_{\text{tw}})=\Mt\oplus \Sigma^{1,1}\Mt\oplus \Sigma^{2,1}\Mt$. See Figure \ref{fig:basicshift}. We will sometimes describe $\Sigma^{1,0}\Mt$ as having ``shifted" up to $\Sigma^{1,1}\Mt$ while $\Sigma^{2,2}\Mt$ shifted down to $\Sigma^{2,1}\Mt$.
\end{example} 

\begin{figure}[ht]
	\begin{center}
	\begin{tikzpicture}[scale=.6]
		\draw (1,-.1)--(1,.1);
		\draw (2,-.1)--(2,.1);
		\draw (-.1,1)--(.1,1); 
		\draw (-.1,2)--(.1,2);
		\draw[->] (0,0)--(0,3);
		\draw[->] (0,0)--(3,0);
		\draw[thick,-] (4,1)--(5,1);
		\draw[thick,-] (4,1.2)--(5,1.2);
		\draw[thick,-] (5.1,1.1)--(4.8,1.4);
		\draw[thick,-] (5.1,1.1)--(4.8,0.8);
		\draw (0.1,0.1) node {$\bullet$};
		\draw[->] (1.1,.4)--(1.1,.9);
		\draw[->] (2.1,1.8)--(2.1,1.3);
		\color{teal}
		\draw (1.1,0.1) node {$\bullet$};
		\color{magenta}
		\draw (2.1,2.1) node {$\bullet$};
	\end{tikzpicture}
	\quad
	\begin{tikzpicture}[scale=.6]
		\draw (1,-.1)--(1,.1);
		\draw (2,-.1)--(2,.1);
		\draw (-.1,1)--(.1,1);
		\draw (-.1,2)--(.1,2);
		\draw[->] (0,0)--(0,3);
		\draw[->] (0,0)--(3,0);
		\draw (0.1,0.1) node {$\bullet$};
		\draw[->] (1.1,.4)--(1.1,.9);
		\draw[->] (2.1,1.8)--(2.1,1.3);
		\color{violet}
		\draw (1.1,1.1) node {$\bullet$};
		\color{blue}
		\draw (2.1,1.1) node {$\bullet$};
		
	\end{tikzpicture}
	\caption{$E_1 \Rightarrow H^{\ast,\ast}(\R P^2_{\text{tw}})$.}
	\end{center}
	\label{fig:basicshift}
\end{figure}

In Sections 2.3 and 2.4 of \cite{Me} we give a method for creating multiple distinct $\text{Rep}(C_2)$ constructions of (and hence multiple spectral sequences for) Grassmannian manifolds of the form $\Gr_k(V)$ where $V$ is some real representation of $C_2$. This is also discussed in Example 4.1 of \cite{Dan} and a little in Example \ref{ex:schubert} of this paper.

We will denote by $\mathcal{U}$ the forgetful functor from equivariant spaces to spaces, and by $\psi$ the corresponding map from equivariant cohomology to singular cohomology. Finally, we will use $\mathcal{F}$ to denote  the fixed-set functor on spaces. Note that $\mathcal {F}(S^{a,b})=S^{a-b}$ and if $H^{\ast,\ast}(X)=\bigoplus_i\Sigma^{a_i,b_i}\Mt$ then $H^\ast(\mathcal FX)=\bigoplus_i\Sigma^{a_i-b_i}\Ft$.


\section{Computational tools}
\label{section:tools}

	Given a non-equivariant space $X$, we have a graded $\mathbb{F}_2$-module $H^\ast (X;\Ft)$ and we can define a classical Poincar\'e polynomial in $\Ft$ coefficients, $$ P(H^\ast X)=\sum_i\rank_{\mathbb{F}_2}(H^i(X);\mathbb{F}_2)x^i.$$
	However since our theory is bigraded, we will define another polynomial statistic $\P$ in two variables for free $\Mt$-modules:
	$$\mathcal{P}\left(\bigoplus_i \Sigma^{a_i,b_i}\Mt\right)=\sum_ix^{a_i}y^{b_i}.$$
	The natural transformations $\psi$ and $\mathcal F$ can be reflected on the polynomial side with operators $U:f(x,y)\mapsto f(x,1)$ and $F:f(x,y)\mapsto f(x,x\inv)$ as shown in Figure \ref{fig:commsquares}.

\begin{figure}[ht]
	\begin{center}
	\begin{tikzpicture}[scale=.7,node distance=1.5cm, descr/.style={fill=white,inner sep=2pt}]
			\def\scale{4}
			\draw(0,0) node {$H^{\ast} (\mathcal U(\,\,))$};
			\draw(0,\scale) node {$\H\ast\ast (\,\,)$};
			\draw(\scale,0) node {$\Z[x]$};
			\draw(\scale,\scale) node {$\Z[x,y]$};
			\draw[->](1,\scale) to node[descr] {$\mathcal{P}$} (\scale-0.9,\scale);
			\draw[->](1.3,0) to node[descr] {$P$} (\scale-.7,0);
			\draw[->](0,\scale-0.5) to node[descr] {$\psi$} (0,0.5);
			\draw[->](\scale,\scale-0.5) to node[descr] {$U$} (\scale,0.5);
		\end{tikzpicture}
		\qquad
		\begin{tikzpicture}[scale=.7,node distance=2cm, descr/.style={fill=white,inner sep=2pt}]
				\def\scale{4}
				\draw(0,0) node {$H^{\ast} (\mathcal F(\,\,))$};
				\draw(0,\scale) node {$\H\ast\ast (\,\,)$};
				\draw(\scale,0) node {$\Z[x]$};
				\draw(\scale,\scale) node {$\Z[x,y]$};
				\draw[->](1,\scale) to node[descr] {$\mathcal{P}$} (\scale-0.9,\scale);
				\draw[->](1.4,0) to node[descr] {$P$} (\scale-.7,0);
				\draw[->](0,\scale-0.5) to node[descr] {$\circ\mathcal F$} (0,0.5);
				\draw[->](\scale,\scale-0.5) to node[descr] {$F$} (\scale,0.5);
		\end{tikzpicture}
	\caption{$U(\P(H^{\ast,\ast}X))=P(\psi(H^{\ast,\ast}X))$ and $F(\P(H^{\ast,\ast}X))=P(H^{\ast}\mathcal{F}(X))$.}
	\label{fig:commsquares}
	\end{center}
\end{figure}
\begin{defn}
If $A$ and $B$ are $\Mt$ modules, with $B$ obtained from $A$ by shifting an $\Mt$ up to a $\Sigma^{0,s}\Mt$ and a $\Sigma^{n,n+s}\Mt$ down to a $\Sigma^{n,n}\Mt$ as in a Kronholm shift\footnote{In other words, $A\oplus \Sigma^{0,s}\Mt\oplus \Sigma^{n,n}\Mt=B\oplus \Mt\oplus \Sigma^{n,n+s}\Mt$.} define the \emph{Kronholm polynomial} of the shift to be
\begin{align*}
	K_{n,s}&:=\mathcal{P}(B)-\mathcal{P}(A)\\
	&=x^0y^s-x^0y^0+x^ny^n-x^{n}y^{n+s}\\
	&=(1-x^ny^n)(y^s-1).
\end{align*}

\end{defn}

\begin{example} 
\label{ex:k11}
In Example \ref{ex:rp2tw} we saw a Kronholm shift from $A=\Mt\oplus {\color{teal}\Sigma^{1,0}\Mt}\oplus {\color{magenta}\Sigma^{2,2}\Mt}$ to the cohomology $B=\Mt\oplus {\color{violet}\Sigma^{1,1}\Mt}\oplus{\color{blue} \Sigma^{2,1}\Mt}$. See Figure \ref{fig:basicshift}. This shift has Kronholm polynomial $$\mathcal{P}(B)-\mathcal{P}(A)=(1+{\color{violet}xy}+{\color{blue}x^2y})-(1+{\color{teal}x}+{\color{magenta}x^2y^2})=x(1-xy)(y-1)=xK_{1,1}.$$

\end{example}

Notice that since the generator that shifts up by $s=1$ doesn't start as $\Mt$ but as {\color{teal}$\Sigma^{1,0}\Mt$}, and since the generator that shifts down is located $n=1$ topological dimensions to its right, the Kronholm polynomial of this shift is $x^1y^0K_{n,s}=xK_{1,1}$.\\
	
\begin{example}  
\label{ex:242}	
When $X=\Gr_2(\R^{4,2})$ is built by identifying $\R^{4,2}\iso \R^{++--}$, we have an $E_1$ page $E_1=\Mt\oplus\Sigma^{1,1}\Mt\oplus\Sigma^{2,1}\Mt^{\oplus 2}\oplus\Sigma^{3,1}\Mt\oplus\Sigma^{4,4}\Mt$ converging to $\H\ast\ast (X)=\Mt\oplus\Sigma^{1,1}\Mt\oplus\Sigma^{2,1}\Mt\oplus\Sigma^{2,2}\Mt\oplus\Sigma^{3,2}\Mt\oplus\Sigma^{4,2}\Mt$. See Figure \ref{fig:242}. We can also assign a Kronholm polynomial to this convergence, namely $$(x^4y^2 + x^3y^2 + x^2y^2 + x^2y + xy + 1 )-(x^4y^4 + x^3y + 2x^2y + xy + 1)
=x^3yK_{1,2}+x^2yK_{1,1}.$$

\end{example}

\begin{figure}[ht]
\begin{center}
	\begin{tikzpicture}[scale=.4]
		\draw (0,0)--(0,5);
		\draw[gray] (1,0)--(1,5);
		\draw[gray] (2,0)--(2,5);
		\draw (0,0)--(5,0);
		\draw[gray] (0,1)--(5,1);
		\draw[gray] (0,2)--(5,2);
		\draw[gray] (0,3)--(5,3);
		\draw[gray] (0,4)--(5,4);
		\draw[gray] (3,0)--(3,5);
		\draw[gray] (4,0)--(4,5);
		\draw(4,4) node[above right] {1};
		\draw(3,1) node[above right] {1};
		\draw(2,1) node[above right] {2};
		\draw(1,1) node[above right] {1};
		\draw(0,0) node[above right] {1};
		\draw(2.5,-1) node {$E_1$};
		\draw[thick,-] (3+3.5,1+1.3)--(5+3.5,1+1.3);
		\draw[thick,-] (3+3.5,1.2+1.3)--(5+3.5,1.2+1.3);
		\draw[thick,-] (5.1+3.5,1.1+1.3)--(4.8+3.5,1.4+1.3);
		\draw[thick,-] (5.1+3.5,1.1+1.3)--(4.8+3.5,0.8+1.3);
	\end{tikzpicture}
	\quad
	\begin{tikzpicture}[scale=.4]
		\draw (0,0)--(0,5);
		\draw[gray] (1,0)--(1,5);
		\draw[gray] (2,0)--(2,5);
		\draw (0,0)--(5,0);
		\draw[gray] (0,1)--(5,1);
		\draw[gray] (0,2)--(5,2);
		\draw[gray] (0,3)--(5,3);
		\draw[gray] (0,4)--(5,4);
		\draw[gray] (3,0)--(3,5);
		\draw[gray] (4,0)--(4,5);
		\draw(2.5,-1) node {$\H\ast\ast (X)$};
		\draw(4,2) node[above right] {1};
		\draw(3,2) node[above right] {1};
		\draw(2,2) node[above right] {1};
		\draw(2,1) node[above right] {1};
		\draw(1,1) node[above right] {1};
		\draw(0,0) node[above right] {1};
	\end{tikzpicture}
	\caption{Rank Table, counting copies of $\Mt$ in each bidegree}
	\label{fig:242}
\end{center}
\end{figure}

Since any change caused by a sequence of Kronholm shifts will show up in the image of $\mathcal P$ as a sum of Kronholm polynomials with coefficients from $\Z[x,y]$, we wish to study all such combinations, motivating the following definition.

\subsection{The Kronholm Ideal}

\begin{defn}
Let $\mathbb{K}\subseteq \Z[x,y]$ be the ideal generated by all Kronholm shifts $K_{n,s}$.\\
\end{defn}

\begin{example}
\label{ex:ambig1}
Suppose there is a sequence of shifts in which the ``downshifted'' generator from previous shifts becomes the ``upshifted'' generator in the next.\footnote{Say if we compute cohomology each time we attach a new cell to a complex.}

\begin{center}
	
	\begin{tikzpicture}[scale=.3]
		\draw(0,-.5) node{}; 
		\draw (0,0)--(0,4.5);
		\draw (0,0)--(4.5,0);
		\draw (2,-.1)--(2,.1);
		\draw (3,-.1)--(3,.1);
		\draw (4,-.1)--(4,.1);
		\draw (-.1,1)--(.1,1);
		\draw (-.1,2)--(.1,2);
		\draw (-.1,3)--(.1,3);
		\draw (-.1,4)--(.1,4);
		\draw (1,0) node {$\bullet$};
		\draw (2,2) node {$\bullet$};
		\draw (3,3) node {$\bullet$};
		\draw (4,4) node {$\bullet$};
		\draw[->] (1,0)--(1,1);
		\draw[->] (2,2)--(2,1);
		\draw[thick,->] (6,2)--node[above]{$xK_{1,1}$}(7,2);
	\end{tikzpicture}
	\begin{tikzpicture}[scale=.3]
		\draw(0,-.5) node{}; 
		\draw (0,0)--(0,4.5);
		\draw (0,0)--(4.5,0);
		\draw (1,-.1)--(1,.1);
		\draw (2,-.1)--(2,.1);
		\draw (3,-.1)--(3,.1);
		\draw (4,-.1)--(4,.1);
		\draw (-.1,1)--(.1,1);
		\draw (-.1,2)--(.1,2);
		\draw (-.1,3)--(.1,3);
		\draw (-.1,4)--(.1,4);
		\draw (1,1) node {$\bullet$};
		\draw (2,1) node {$\bullet$};
		\draw (3,3) node {$\bullet$};
		\draw (4,4) node {$\bullet$};
		\draw[->] (2,1)--(2,2);
		\draw[->] (3,3)--(3,2);
		\draw[thick,->] (6,2)--node[above]{$x^2yK_{1,1}$}(6,2);
	\end{tikzpicture}
	\begin{tikzpicture}[scale=.3]
		\draw(0,-.5) node{}; 
		\draw (0,0)--(0,4.5);
		\draw (0,0)--(4.5,0);
		\draw (1,-.1)--(1,.1);
		\draw (2,-.1)--(2,.1);
		\draw (3,-.1)--(3,.1);
		\draw (4,-.1)--(4,.1);
		\draw (-.1,1)--(.1,1);
		\draw (-.1,2)--(.1,2);
		\draw (-.1,3)--(.1,3);
		\draw (-.1,4)--(.1,4);
		\draw (1,1) node {$\bullet$};
		\draw (2,2) node {$\bullet$};
		\draw (3,2) node {$\bullet$};
		\draw (4,4) node {$\bullet$};
		\draw[->] (3,2)--(3,3);
		\draw[->] (4,4)--(4,3);
		\draw[thick,->] (7,2)--node[above]{$x^3y^2K_{1,1}$}(8,2);
	\end{tikzpicture}
	\begin{tikzpicture}[scale=.3]
		\draw(0,-.5) node{}; 
		\draw (0,0)--(0,4.5);
		\draw (0,0)--(4.5,0);
		\draw (1,-.1)--(1,.1);
		\draw (2,-.1)--(2,.1);
		\draw (3,-.1)--(3,.1);
		\draw (4,-.1)--(4,.1);
		\draw (-.1,1)--(.1,1);
		\draw (-.1,2)--(.1,2);
		\draw (-.1,3)--(.1,3);
		\draw (-.1,4)--(.1,4);
		\draw (1,1) node {$\bullet$};
		\draw (2,2) node {$\bullet$};
		\draw (3,3) node {$\bullet$};
		\draw (4,3) node {$\bullet$};
	\end{tikzpicture}
	
\end{center}

This sequence of shifts has the same outcome as a single shift involving only the first and last generator\footnote{Say if we just run one spectral sequence.}:

\begin{center}
	\begin{tikzpicture}[scale=.4]
		\draw(0,-.5) node{}; 
		\draw (0,0)--(0,4.5);
		\draw (0,0)--(4.5,0);
		\draw (2,-.1)--(2,.1);
		\draw (3,-.1)--(3,.1);
		\draw (4,-.1)--(4,.1);
		\draw (-.1,1)--(.1,1);
		\draw (-.1,2)--(.1,2);
		\draw (-.1,3)--(.1,3);
		\draw (-.1,4)--(.1,4);
		\draw (1,0) node {$\bullet$};
		\draw (2,2) node {$\bullet$};
		\draw (3,3) node {$\bullet$};
		\draw (4,4) node {$\bullet$};
		\draw[->] (1,0)--(1,1);
		\draw[->] (4,4)--(4,3);
		\draw[thick,->] (6,2)--node[above]{$xK_{3,1}$}(7,2);
	\end{tikzpicture}
	\begin{tikzpicture}[scale=.4]
		\draw(0,-.5) node{}; 
		\draw (0,0)--(0,4.5);
		\draw (0,0)--(4.5,0);
		\draw (1,-.1)--(1,.1);
		\draw (2,-.1)--(2,.1);
		\draw (3,-.1)--(3,.1);
		\draw (4,-.1)--(4,.1);
		\draw (-.1,1)--(.1,1);
		\draw (-.1,2)--(.1,2);
		\draw (-.1,3)--(.1,3);
		\draw (-.1,4)--(.1,4);
		\draw (1,1) node {$\bullet$};
		\draw (2,2) node {$\bullet$};
		\draw (3,3) node {$\bullet$};
		\draw (4,3) node {$\bullet$};
	\end{tikzpicture}
\end{center}

The algebraic fact shown, that $x(1+xy+x^2y^2)K_{1,1}=xK_{3,1}$, illustrates that knowing the beginning and end of the story may not be enough to determine that sequence of events between. This particular type of ambiguity is captured in the following lemma.
\end{example}

\begin{lemma}
	\label{lem1}
	$\ds K_{n,s}=\left(\sum_{i=0}^{n-1}(xy)^i\right)K_{1,s}$.
\end{lemma}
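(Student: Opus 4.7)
The plan is to unpack both sides using the explicit closed form $K_{n,s} = (1-x^ny^n)(y^s-1)$ from the definition, and reduce the claim to the standard geometric series factorization.

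First I would write out the right-hand side:
\[
\left(\sum_{i=0}^{n-1}(xy)^i\right) K_{1,s} = \left(\sum_{i=0}^{n-1}(xy)^i\right)(1-xy)(y^s-1).
\]
Then I would apply the telescoping identity $\sum_{i=0}^{n-1}t^i \cdot (1-t) = 1 - t^n$ with $t = xy$, which collapses the first two factors to $1 - x^n y^n$. What remains is $(1 - x^n y^n)(y^s - 1)$, which is exactly $K_{n,s}$ by definition.

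There is no real obstacle here — the content of the lemma is just the geometric series identity packaged in the $K$-notation. The one thing worth pointing out explicitly in the write-up is the substitution $t = xy$, since both $x$ and $y$ appear to the same power in the generator of the sum; this is what makes the factorization close up on the nose and produce the factor $1 - x^n y^n$ that appears in $K_{n,s}$.
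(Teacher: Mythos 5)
Your proof is correct and is essentially the paper's own argument: both expand $K_{1,s}=(1-xy)(y^s-1)$ and collapse $\sum_{i=0}^{n-1}(xy)^i\cdot(1-xy)$ to $1-(xy)^n$ via the geometric series identity, yielding $(1-x^ny^n)(y^s-1)=K_{n,s}$. No gaps.
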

Algebraically this is immediate:\\ 
$$\left(\sum_{i=0}^{n-1}(xy)^i\right)K_{1,s}=\frac{1-(xy)^n}{1-xy}(1-xy)(y^s-1)=(1-x^ny^n)(y^s-1)=K_{n,1}.$$\\

\begin{example}
\label{ex:ambig2}
Similarly, we can imagine multiple shifts happened in sequence in the same topological dimensions, resulting in the same outcome as one ``larger'' shift. Figure \ref{fig:versus} illustrates\footnote{This example doesn't correspond to a possible scenario coming from equivariant spaces. If this bothers you, multiply everything by $x^3$ for a scenario where no bidegree's weight exceeds its topological dimension.} the algebraic fact that $K_{1,3}=(1+y+y^2)K_{1,1}$. This too can be generalized with a lemma.
 
\begin{figure}[ht!]
	\begin{center}
	\begin{tikzpicture}[scale=.4]
		\draw(0,-.5) node{}; 
		\draw (0,0)--(0,4.5);
		\draw (0,0)--(2,0);
		\draw (1,-.1)--(1,.1);
		\draw (-.1,1)--(.1,1);
		\draw (-.1,2)--(.1,2);
		\draw (-.1,3)--(.1,3);
		\draw (-.1,4)--(.1,4);
		\draw (0,0) node {$\bullet$};
		\draw (1,2) node {$\bullet$};
		\draw (1,3) node {$\bullet$};
		\draw (1,4) node {$\bullet$};
		\draw[->] (-.2,0)--(-.2,1);
		\draw[->] (1,2)--(1,1);
		\draw[thick,->] (3,2)--node[above]{$K_{1,1}$}(5,2);
	\end{tikzpicture}
	\begin{tikzpicture}[scale=.4]
		\draw(-1,-.5) node{}; 
		\draw (0,0)--(0,4.5);
		\draw (0,0)--(2,0);
		\draw (1,-.1)--(1,.1);
		\draw (-.1,1)--(.1,1);
		\draw (-.1,2)--(.1,2);
		\draw (-.1,3)--(.1,3);
		\draw (-.1,4)--(.1,4);
		\draw (0,1) node {$\bullet$};
		\draw (1,1) node {$\bullet$};
		\draw (1,3) node {$\bullet$};
		\draw (1,4) node {$\bullet$};
		\draw[->] (-.2,1)--(-.2,2);
		\draw[->] (1,3)--(1,2);
		\draw[thick,->] (3,2)--node[above]{$yK_{1,1}$}(5,2);
	\end{tikzpicture}
	\begin{tikzpicture}[scale=.4]
		\draw(-1,-.5) node{}; 
		\draw (0,0)--(0,4.5);
		\draw (0,0)--(2,0);
		\draw (1,-.1)--(1,.1);
		\draw (-.1,1)--(.1,1);
		\draw (-.1,2)--(.1,2);
		\draw (-.1,3)--(.1,3);
		\draw (-.1,4)--(.1,4);
		\draw (0,2) node {$\bullet$};
		\draw (1,1) node {$\bullet$};
		\draw (1,2) node {$\bullet$};
		\draw (1,4) node {$\bullet$};
		\draw[->] (-.2,2)--(-.2,3);
		\draw[->] (1,4)--(1,3);
		\draw[thick,->] (3,2)--node[above]{$y^2K_{1,1}$}(5,2);
	\end{tikzpicture}
	\begin{tikzpicture}[scale=.4]
		\draw(-1,-.5) node{}; 
		\draw (0,0)--(0,4.5);
		\draw (0,0)--(2,0);
		\draw (0,3) node {$\bullet$};
		\draw (1,1) node {$\bullet$};
		\draw (1,2) node {$\bullet$};
		\draw (1,3) node {$\bullet$};
	\end{tikzpicture}\\
	versus\\
	\begin{tikzpicture}[scale=.4]
		\node at (1,4) (onefour) {};
		\node at (1,1) (oneone) {};
		\draw(0,-.5) node{};
		\draw (0,0)--(0,4.5);
		\draw (0,0)--(2,0);
		\draw (1,-.1)--(1,.1);
		\draw (-.1,1)--(.1,1);
		\draw (-.1,2)--(.1,2);
		\draw (-.1,3)--(.1,3);
		\draw (-.1,4)--(.1,4);
		\draw (0,0) node {$\bullet$};
		\draw (1,2) node {$\bullet$}; 
		\draw (1,3) node {$\bullet$};
		\draw (1,4) node {$\bullet$};
		\draw[->] (-.2,0)--(-.2,3);
		\draw[->, bend left] (onefour) to node {} (oneone);
		\draw[thick,->] (4,2)--node[above]{$K_{1,3}$}(6,2);
	\end{tikzpicture}
	\begin{tikzpicture}[scale=.4]
		\draw(2,-.5) node{}; 
		\draw (4,0)--(4,4.5);
		\draw (4,0)--(6,0);
		\draw (5,-.1)--(5,.1);
		\draw (4-.1,1)--(4.1,1);
		\draw (4-.1,2)--(4.1,2);
		\draw (4-.1,3)--(4.1,3);
		\draw (4-.1,4)--(4.1,4);
		\draw (4,3) node {$\bullet$};
		\draw (5,2) node {$\bullet$};
		\draw (5,3) node {$\bullet$};
		\draw (5,1) node {$\bullet$};
	\end{tikzpicture}
	\caption{Two shifting scenarios with the same outcome.}
	\label{fig:versus}
	\end{center}
\end{figure}
\end{example}

\begin{lemma}
	\label{lem2}
	$\ds K_{1,s}=\left(\sum_{j=0}^{s-1}y^j\right)K_{1,1}$.\\
\end{lemma}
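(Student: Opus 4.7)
The plan is to mirror the one-line algebraic argument used for Lemma \ref{lem1}: unfold both sides via the definition $K_{n,s} = (1-x^n y^n)(y^s - 1)$ and recognize a geometric series identity in $y$.

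First I would write $K_{1,s} = (1-xy)(y^s - 1)$ and $K_{1,1} = (1-xy)(y - 1)$ directly from the definition. Then I would factor out the common $(1-xy)$ and reduce the claim to the scalar identity $y^s - 1 = \left(\sum_{j=0}^{s-1} y^j\right)(y-1)$, which is the standard geometric series factorization over $\Z[y]$. Assembling these, I get
\[
\left(\sum_{j=0}^{s-1} y^j\right) K_{1,1} = \left(\sum_{j=0}^{s-1} y^j\right)(1-xy)(y-1) = (1-xy)(y^s - 1) = K_{1,s},
\]
which is the claim.

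There is no real obstacle: the content of the lemma is entirely algebraic, and the proof fits in a single display. I would present it in parallel with Lemma \ref{lem1} (which factored the $n$-dependence through $(1-(xy)^n)/(1-xy)$) to emphasize that the two lemmas together explain the two independent sources of ambiguity in Kronholm-shift decompositions: Lemma \ref{lem1} handles ambiguities in the topological-dimension gap $n$, and Lemma \ref{lem2} handles ambiguities in the shift size $s$. One might also briefly note, for context, that combining the two gives the fully factored form $K_{n,s} = \left(\sum_{i=0}^{n-1}(xy)^i\right)\!\left(\sum_{j=0}^{s-1} y^j\right) K_{1,1}$, confirming that every Kronholm polynomial lies in the ideal generated by $K_{1,1}$ alone, a fact that will presumably be useful when analyzing the Kronholm ideal $\mathbb{K}$ in the next subsection.
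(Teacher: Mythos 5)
Your proof is correct and is exactly the argument the paper intends: the paper proves Lemma \ref{lem1} by the same one-line geometric-series manipulation and leaves Lemma \ref{lem2} as an immediate analogue, which your factorization $y^s-1=\left(\sum_{j=0}^{s-1}y^j\right)(y-1)$ supplies. Your closing remark combining the two lemmas is precisely the paper's ``Proof 2'' that $\mathbb{K}=(K_{1,1})$.
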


In fact, all Kronholm shifts relate back to the fundamental shift $K_{1,1}$.\\

\begin{thm}
	$\mathbb K$ is principally generated: $\mathbb K=(K_{1,1})$.
\end{thm}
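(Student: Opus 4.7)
The plan is to establish $\mathbb{K} = (K_{1,1})$ by verifying both inclusions. The containment $(K_{1,1}) \subseteq \mathbb{K}$ is immediate, since $K_{1,1}$ is one of the generators of $\mathbb{K}$ by definition. For the reverse containment, because an ideal is determined by its generators, it suffices to show each $K_{n,s}$ lies in the principal ideal $(K_{1,1})$.

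To do this, I would simply chain the two preceding lemmas. First, Lemma \ref{lem1} gives
\[ K_{n,s} = \left(\sum_{i=0}^{n-1}(xy)^i\right) K_{1,s}, \]
rewriting an arbitrary Kronholm polynomial as a $\mathbb{Z}[x,y]$-multiple of one whose first index is $1$. Next, Lemma \ref{lem2} gives
\[ K_{1,s} = \left(\sum_{j=0}^{s-1} y^j\right) K_{1,1}, \]
reducing the remaining parameter as well. Substituting yields
\[ K_{n,s} = \left(\sum_{i=0}^{n-1}(xy)^i\right)\left(\sum_{j=0}^{s-1} y^j\right) K_{1,1}, \]
which exhibits $K_{n,s}$ explicitly as an element of $(K_{1,1})$. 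Hence every generator of $\mathbb{K}$ lies in $(K_{1,1})$, and the two ideals coincide.

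There is no serious obstacle here: the algebraic content is already packaged in Lemmas \ref{lem1} and \ref{lem2}, and the theorem is really a corollary of their composition together with the definition of an ideal. The only thing to check is that the product of the two telescoping factors multiplies $K_{1,1}$ to give exactly $(1-x^ny^n)(y^s-1)$, which follows from the identities $(1-xy)\sum_{i=0}^{n-1}(xy)^i = 1 - x^n y^n$ and $(y-1)\sum_{j=0}^{s-1} y^j = y^s - 1$ used in proving the two lemmas.
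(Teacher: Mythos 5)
Your argument is correct and is essentially identical to the paper's second proof, which likewise composes Lemmas \ref{lem1} and \ref{lem2} to write $K_{n,s}$ as an explicit $\Z[x,y]$-multiple of $K_{1,1}$. (The paper also offers a second, slicker argument via $\mathbb K\subseteq\ker(F)\cap\ker(U)=(1-xy)\cap(y-1)=(K_{1,1})$, but your route is the same as its Proof 2.)
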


\begin{proof}[Proof 1]
	Note that because Kronholm shifts are invisible to the natural transformations $F$ and $U$, $\mathbb K\subseteq \ker(F)\cap \ker(U)=(1-xy)\cap(y-1)=(K_{1,1})$ and of course $K_{1,1}\in \mathbb K$.
\end{proof}

\begin{proof}[Proof 2]
	It follows from the lemmas \ref{lem1} and \ref{lem2} that 
	$$ K_{n,s}=\left(\sum_{i=0}^{n-1}(xy)^i\right)\left(\sum_{j=0}^{s-1}y^j\right)K_{1,1}.$$
\end{proof}

We can now justify the following definition.\\

\begin{defn}
	\label{def:story}
	For $\Mt$-modules $A$ and $B$ related by Kronholm shifts, the \textbf{shift story} from $A$ to $B$ is the polynomial $$f(x,y)=\frac{\mathcal{P}(B)-\mathcal{P}(A)}{K_{1,1}}.$$
\end{defn}

\begin{example}
	The Kronholm shift in Example \ref{ex:k11} corresponds to a shift story of $f(x,y)=x=x^1y^0$, indicating a differential from $\Sigma^{1,0}\Mt$ causing it to shift up.
	
	But as the ambiguities in Examples \ref{ex:ambig1} and \ref{ex:ambig2} illustrate, this story is only a story. The shift story for $\Gr_2(\R^{4,2})$ in Example \ref{ex:242} is $f(x,y)=x^2y+x^3y+x^3y^2$. However $E_1$ has no generators in degree $(3,2)$. The appearance of the term $x^3y^2$ is an artifact of the identity $x^3yK_{1,2}=x^3y(1+y)K_{1,1}$ from Lemma \ref{lem2}.\\
\end{example}

A reader interested only in computing Bredon cohomology may safely skip the following section.

\subsection{Aside: Another invariant}

Before stating the theorem, we recall that each Schubert cell can be indexed by a Young diagram. The cell's dimension corresponds to the number of degrees of freedom in a matrix.\\

\begin{example}
	\label{ex:schubert}
	
	 For example $\Gr_3\R^7$ contains the cell 
	
	\[\Omega_{\text{\tiny$\yng(1,3,4)$}}=\left\{\text{rowspace}\begin{bmatrix}a&1&0&0&0&0&0\\b&0&c&d&1&0&0\\e&0&f&g&0&h&1\end{bmatrix}:a,b,c,d,e,f,g,h\in \R\right\}\]

with eight degrees of freedom. Choosing for example the representation $\R^{--++-++}$ we have the action

\begin{align*}
	\text{rowspace}\begin{bmatrix}a&1&0&0&0&0&0\\b&0&c&d&1&0&0\\e&0&f&g&0&h&1\end{bmatrix}
	&\mapsto
	\text{rowspace}\begin{bmatrix}-a&-1&0&0&0&0&0\\-b&0&c&d&-1&0&0\\-e&0&f&g&0&h&1\end{bmatrix}\\
	&=\text{rowspace}\begin{bmatrix}a&1&0&0&0&0&0\\b&0&-c&-d&1&0&0\\-e&0&f&g&0&h&1\end{bmatrix}
\end{align*}

Or more briefly, {\small $\young(+,+++,++++)\mapsto \young(+,+--,-+++)$}.

In this construction, this particular Schubert cell is homeomorphic to $D^{8,3}$.\\

\end{example}

For a given degree of freedom in a given Schubert cell, consider its horizontal position in the Schubert matrix as well as the last nonzero horizontal position in that row. This degree of freedom will have a sign action when one or the other of these horizontal positions corresponds to an $\R^-$ (but not both). For example note the degree of freedom $c$  in the Schubert cell above has a sign action not just for $\R^{--++-++}$ but for any representation of the form $\R^{??\pm?\mp??}$. We are now ready to state and prove\\

\begin{thm}\label{TW}
	\[T(\Gr_k(\R^{p,q}))=(p-q)q\binom{p-2}{k-1}.\]
\end{thm}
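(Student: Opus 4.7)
The plan is to reduce $T(\Gr_k(\R^{p,q}))$ to a combinatorial count on Schubert cells and then evaluate that count by exchanging summation. First I would justify that $T$ equals the total of all cell weights. The $E_1$ page of the Schubert filtration spectral sequence has one generator $\Sigma^{a,b}\Mt$ per Schubert cell $D^{a,b}$, so $\mathcal{P}(E_1) = \sum_{\text{cells}} x^a y^b$. Define the linear functional $\Lambda : \Z[x,y] \to \Z$ by $\Lambda(f) = (\partial_y f)(1,1)$; a direct computation from $K_{n,s} = (1 - x^n y^n)(y^s - 1)$ gives $\Lambda(K_{n,s}) = s + n - (n+s) = 0$, so $\Lambda$ descends to $\Z[x,y]/\mathbb{K}$. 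Applied to a representative of the form $p(x) + x^n y^T$ with $\deg p \le n-1$, $\Lambda$ returns $T$. Hence $T(\Gr_k(\R^{p,q})) = \Lambda(\mathcal{P}(E_1))$, which is just the sum of the $b$-weights over all Schubert cells.

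Next I would write this sum using the Schubert parameterization. A cell in $\Gr_k(\R^p)$ is indexed by its pivot set $P = \{p_1 < \cdots < p_k\} \subseteq \{1,\ldots,p\}$, and the degrees of freedom in $\Omega_P$ correspond bijectively with pairs $(j, p_i)$ satisfying $j < p_i$ and $j \notin P$. By the criterion recalled just before the theorem, such a pair is a sign-action degree of freedom exactly when precisely one of the columns $j, p_i$ lies in the set $M \subseteq \{1,\ldots,p\}$ of $-$ positions (where $|M| = q$). Therefore
\[ T(\Gr_k(\R^{p,q})) = \sum_{P} \#\{(j,p_i) : j < p_i,\ j \notin P,\ |\{j,p_i\} \cap M| = 1\}. \]

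Then I would swap the order of summation: fix a pair $j < \pi$ with $|\{j,\pi\} \cap M| = 1$, and count the $k$-subsets $P \subseteq \{1,\ldots,p\}$ with $\pi \in P$ and $j \notin P$. Such subsets are in bijection with the $(k-1)$-subsets of the remaining $p-2$ columns, giving $\binom{p-2}{k-1}$. The number of pairs $j < \pi$ with exactly one in $M$ is $|M|\cdot|M^c| = q(p-q)$, and multiplying yields the claimed formula.

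The main obstacle I anticipate is the first step---pinning down precisely why $T$ agrees with the sum of cell weights, since the definition of $T$ lives in the elided aside and one must check both that the stated canonical representative exists and that $\Lambda$ is the right invariant. Once that bookkeeping is secured, the rest is a routine double count, and it is pleasantly insensitive to the particular arrangement of $+$s and $-$s in $\R^{p,q}$, which explains the clean closed form.
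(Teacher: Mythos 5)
Your proof is correct, and its combinatorial core is genuinely different from the paper's. The paper fixes a degree of freedom (a pair of column positions) and averages over all $\binom{p}{q}$ sprinklings: it counts triples (sprinkling, cell, signed degree of freedom) using the average cell dimension $\tfrac{k(p-k)}{2}$ and the count $2\binom{p-2}{q-1}$ of sprinklings that sign a given entry, then divides by $\binom{p}{q}$ --- which requires as input that $T$ is independent of the sprinkling (a fact the paper gets from topology, since every sprinkling presents the same space). You instead fix one sprinkling $M$ and exchange the order of summation over cells and over column pairs $(j,\pi)$, obtaining $q(p-q)\binom{p-2}{k-1}$ directly; this is more elementary, avoids the averaging, and yields the independence from $M$ as a corollary since only $|M|=q$ survives. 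Your functional $\Lambda = \partial_y(\,\cdot\,)|_{(1,1)}$ also supplies a justification, only asserted in the paper, that $T$ descends to $\Z[x,y]/\mathbb{K}$ and hence can be read off the $E_1$ page as the sum of weights; the one piece of bookkeeping you flag (existence of the canonical representative $p(x)+x^ny^T$) is not actually needed for the theorem, since $\Lambda(\mathcal{P}(E_1))$ serves as the definition of $T$. Finally, your count lands on $\binom{p-2}{k-1}$, matching the theorem as stated; the last display of the paper's proof writes $\binom{p-2}{k-2}$, which is an algebra slip (one has $\binom{p}{k}k(p-k)=p(p-1)\binom{p-2}{k-1}$), so your argument independently confirms the stated formula.
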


\begin{proof}

	We will deliberately overcount, summing degrees of freedom with nontrivial action across all $\binom pq$ ``sprinklings,'' that is all of the ways of ordering $q$ copies of $\R^-$ among $p-q$ copies of $\R^+$ to build $\R^{p,q}$. The $\binom pk$ different Schubert cells have on average $\frac{k(p-k)}2$ degrees of freedom by symmetry of Young diagrams fitting with a $k\times (p-k)$ box. Each degree of freedom in each Schubert cell will have a sign action when horizontal position and that if its row's last nonzero entry correspond to different representations, as discussed above. The remaining $p-2$ horizontal positions may be occupied by the remaining $q-1$ sign representation in $\binom{p-2}{q-1}$ ways. And so summing over all sprinklings $s$ and noting that total weight is independent of this choice of $\R^{p,q}$, we have
	\[\sum_sT(s)=\binom pq T=\underbrace{\binom pk}_{\text{\# cells}}\cdot \underbrace{\frac{k(p-k)}2}_{\text{avg cell dim}}\cdot \underbrace{2\binom{p-2}{q-1}}_{\text{\#} \R^{-} \text{ actions}}\]
	And thus 
	\[T=\binom pk k(p-k)\cdot \frac{ \binom{p-2}{q-1}}{\binom pq}=p(p-1)\binom{p-2}{k-2}\cdot \frac{(p-q)q}{p(p-1)}=(p-q)q\binom{p-2}{k-2}.\]
\end{proof}

\begin{example} Suppose we want to check $T(\Gr_1\R^{3,2})$. There are $\binom32$ ways to identify $\R^{3,2}$.\\
\begin{figure}[ht]
	\begin{center}
		\begin{tabular}{r|c|c|c}
			&$\R^{--+}$&$\R^{-+-}$&$\R^{+--}$\\
			\hline
			\phantom{$\displaystyle\int$}2-cell&\young(--)&\young(+-)&\young(-+)\\
			\phantom{$\displaystyle\int$}1-cell&\young(+)&\young(-)&\young(-)\\
			\phantom{$\displaystyle\int$}0-cell&*&*&*
		\end{tabular}
	\end{center}
	\caption{Three ways to construct the same space}
	\label{threeways}
\end{figure}
We count $\binom32T=\underbrace{\binom31}_{\text{\# cells}}\cdot \underbrace{\frac22}_{\text{avg  dim}}\underbrace{2\binom11}_{\# -}=3\cdot 1\cdot 2$. the last term indicates that any given degree of freedom will be signed exactly twice, as can be checked by reading horizontally in Figure \ref{threeways}.\\
\end{example}

This ends the aside about $\Z[x,y]/\mathbb K$. We next explain an algorithm to use the ideal $\mathbb K$ to efficiently compute some equivariant spectral sequences for cohomology.


\section{Algorithm Design}
\label{section:alg}

A naive approach to computing $\H\ast\ast\Gr_k\R^{p,q}$ might look like

\subsection{Naive algorithm}
\begin{itemize}
	\item Calculate, for each of the $\binom pq$ ways of identifying $\R^{p,q}$ as an ordered direct sum of one-dimensional representations, the $E_1$ page of the filtration spectral sequence for $\Gr_k\R^{p,q}$.
	\item For the $i^\text{th}$ $E_1$ page, find the set $D_i$ of possible differentials, based on bidegree. Since each could be zero or nonzero, this gives roughly $2^{|D_i|}$ possible $E_\infty$ pages, each an associate graded of a free $\Mt$ module. Calculate each of these, creating a set $P_i$ of free modules to which the $i^{\text{th}}$ spectral sequence could possibly converge. Repeat for $1\le i \le \binom pq$. 
	\item Calculate the common set $\bigcap P_i$ of possible answers. If this is a singleton set, we have our cohomology.
\end{itemize}

While practicable for small examples, this approach is computationally expensive. Instead, we use the ideas of the previous section for some refinements.

Define the \textbf{tension} of a free $\Mt$ module $A$ be the number $\mathcal{P}(A)(1,2)$. Since $K_{1,1}(1,2)=-1$, any sequence of Kronholm shifts will reduce tension. This gives an easily computable heuristic. A ``relaxed" $E_1$ page with the lower tension should intuitively have fewer possible $E_\infty$ pages, as compared to a higher-tension $E_1$ which has more potential Kronholm shifts.

Given two free $\Mt$ modules $A$ and $B$, we can easily answer the question of whether $B$ is a Kronholm shift of $A$. First, the polynomial $\mathcal{P}(B)-\mathcal{P}(A)$ must lie in $\mathbb K$. But further it must be the case that the \textbf{shift story} $f(x,y)=\frac{\mathcal{P}(B)-\mathcal{P}(A)}{K_{1,1}}$ is a polynomial with strictly positive coefficients, so we are doing (rather than undoing) shifts to $A$ in order to get $B$. This can be quickly checked with a computer algebra system.

With these tools, we execute the following steps.

\subsection{Improved algorithm}

\begin{enumerate}[a.]
	\item Calculate, for each of the $\binom pq$ ways of identifying $\R^{p,q}$ as a direct sum of one-dimensional representations, the $E_1$ page of the filtration spectral sequence for $\Gr_k\R^{p,q}$. Discard any duplicates.
	\item Find the page with the lowest tension. Call it the $r^\text{th}$ $E_1$ page. Find the set of $D_r$ of possible differentials and compute all of the roughly $2^{|D_r|}$ free modules to which this page could possibly converge.\footnote{This part is still slow, despite the savings gained by choosing a low-tension starting point, with comparatively small $D$ value.} Discard duplicates. We now have a set $P_r$ of possible answers starting from the most relaxed construction. 
	\item For each $i\ne r$ we have an $E_1$ page that must converge to the right answer. Any candidate answer in $P_r$ to which some other $E_1$ page \emph{cannot} converge is therefore incorrect.\par
	But first we reduce redundancy. If the shift story indicates that the $i^\text{th}$ $E_1$ page can shift to the $j^\text{th}$ $E_1$ page, discard the former, as the latter has a more restrictive set of possible outcomes.
	\item For each remaining $E_1$ page and each possible answer $A\in P_r$, use the shift story $\frac{\mathcal{P}(A)-\mathcal{P}(E_1)}{K_{1,1}}$ to check if $A$ is a Kronholm shift of $E_1$. If not, discard $A$.
	\item The list of un-discarded elements of $P_r$ is our set of possible answers. With any luck, $|P_r|=1$.\\
\end{enumerate} 

\begin{example}
\label{ex363}
	
If we wish to compute $\H\ast\ast\Gr_3\R^{6,3}$, the steps look like:

\begin{enumerate}[a.]
	\item Of the $\binom 63=20$ possible constructions, there are 6 unique $E_1$ pages.
	\item One of these has Poincar\'e polynomial $p_r(x,y)=x^9y^5 + x^8y^4 + 2x^7y^4$
	\[ + x^6y^4 + x^5y^5 + 2x^6y^3 + 2x^5y^3 + x^4y^4 + 2x^4y^2 + 2x^3y^2 + x^3y + 2x^2y + xy + 1\]
	with a tension of $p_r(1,2)=201$, the lowest value of the six. After a few dozen milliseconds, we can list the 24 unique free modules associated to all possible $E_\infty$ pages for this $E_1$.
	\item Three of the remaining five $E_1$ pages can shift to others, leaving only two non-redundant starting points.
	\item One of these eliminates 11 of our possible answers since they are not valid Kronholm shifts. The next $E_1$ page eliminates 7 more.
	\item Unfortunately this still leaves $|P_r|=6$. While quite a few new calculations can be successfully executed in this way (see Section \ref{results}) in general still more work is required. In particular, for the completion of this calculation of $\H\ast\ast\Gr_3\R^{6,3}$ using some extra tricks, see Section \ref{finishing363}.
\end{enumerate}

\end{example}


\section{Results}
\label{results}

Here we summarize new and previously known results for $\H\ast\ast(\Gr_k\R^{p,q})$. 

Note that because $\perp:\Gr_k\R^{p,q}\to\Gr_{p-k}\R^{p,q}$ is an isomorphism and $\Gr_k\R^{p,q}\iso\Gr_k\R^{p,p-q}$, we can restrict to $k,q\le\frac p2$. A formula for $\H\ast\ast(\Gr_1\R^{p,q})=\mathbb{P}(\R^{p,q})$ appears in \cite{K}.

\subsection{$q=1$}

A formula for $\H\ast\ast(\Gr_k\R^{p,1})$. appears in \cite{Me}.

\subsection{$q=2$}

A formula for $\H\ast\ast(\Gr_2\R^{p,2})$. appears in \cite{Me}. The algorithm from Section \ref{section:alg} also yields $\H\ast\ast(\Gr_3\R^{6,2})$. See Figure \ref{fig:362}.

\begin{figure}[ht!]
	\begin{center}
	\begin{tikzpicture}[scale=.4]
		\draw (0,0)--(0,5);
		\draw (0,0)--(10,0);
		\draw[gray] (1,0)--(1,5);
		\draw[gray] (2,0)--(2,5);
		\draw[gray] (3,0)--(3,5);
		\draw[gray] (4,0)--(4,5);
		\draw[gray] (5,0)--(5,5);
		\draw[gray] (6,0)--(6,5);
		\draw[gray] (7,0)--(7,5);
		\draw[gray] (8,0)--(8,5);
		\draw[gray] (9,0)--(9,5);
		\draw[gray] (0,1)--(10,1);
		\draw[gray] (0,2)--(10,2);
		\draw[gray] (0,3)--(10,3);
		\draw[gray] (0,4)--(10,4);
		\draw(9,4) node[above right] {1};
		\draw(8,4) node[above right] {1};
		\draw(7,4) node[above right] {1};
		\draw(7,3) node[above right] {1};
		\draw(6,3) node[above right] {3};
		\draw(5,3) node[above right] {2};
		\draw(5,2) node[above right] {1};
		\draw(4,2) node[above right] {3};
		\draw(3,2) node[above right] {3};
		\draw(2,2) node[above right] {1};
		\draw(2,1) node[above right] {1};
		\draw(1,1) node[above right] {1};
		\draw(0,0) node[above right] {1};
	\end{tikzpicture}
	\vspace{-.1in}
	\caption{$\H\ast\ast(\Gr_3\R^{6,2})$}
	\label{fig:362}
	\end{center}
\end{figure}

However two possibilities remain for $\H\ast\ast(\Gr_3\R^{7,2})$. Part (b) of the computation for $\H\ast\ast(\Gr_4\R^{8,2})$ takes almost two minutes and leaves six possibilities. Further increases in $k$ and $p$ only make matters worse.

\subsection{$q=3$}

Figures \ref{fig:263} through \ref{fig:283} give $\H\ast\ast(\Gr_2\R^{6,3})$, $\H\ast\ast(\Gr_2\R^{7,3})$ and $\H\ast\ast(\Gr_2\R^{8,3})$.

\begin{figure}[ht!]
	\begin{center}
	\begin{tikzpicture}[scale=.4]
		\draw (0,0)--(0,5);
		\draw (0,0)--(9,0);
		\draw[gray] (1,0)--(1,5);
		\draw[gray] (2,0)--(2,5);
		\draw[gray] (3,0)--(3,5);
		\draw[gray] (4,0)--(4,5);
		\draw[gray] (5,0)--(5,5);
		\draw[gray] (6,0)--(6,5);
		\draw[gray] (7,0)--(7,5);
		\draw[gray] (8,0)--(8,5);
		\draw[gray] (0,1)--(9,1);
		\draw[gray] (0,2)--(9,2);
		\draw[gray] (0,3)--(9,3);
		\draw[gray] (0,4)--(9,4);
		\draw(8,4) node[above right] {1};
		\draw(7,4) node[above right] {1};
		\draw(6,4) node[above right] {1};
		\draw(6,3) node[above right] {1};
		\draw(5,3) node[above right] {2};
		\draw(4,3) node[above right] {1};
		\draw(4,2) node[above right] {2};
		\draw(3,2) node[above right] {2};
		\draw(2,2) node[above right] {1};
		\draw(2,1) node[above right] {1};
		\draw(1,1) node[above right] {1};
		\draw(0,0) node[above right] {1};
	\end{tikzpicture}
	\vspace{-.2in}
	\end{center}
	\caption{$\H\ast\ast(\Gr_2\R^{6,3})$}
	\label{fig:263}
\end{figure}

\begin{figure}[ht!]
	\begin{center}
	\begin{tikzpicture}[scale=.4]
		\draw (0,0)--(0,6);
		\draw (0,0)--(11,0);
		\draw[gray] (1,0)--(1,6);
		\draw[gray] (2,0)--(2,6);
		\draw[gray] (3,0)--(3,6);
		\draw[gray] (4,0)--(4,6);
		\draw[gray] (5,0)--(5,6);
		\draw[gray] (6,0)--(6,6);
		\draw[gray] (7,0)--(7,6);
		\draw[gray] (8,0)--(8,6);
		\draw[gray] (9,0)--(9,6);
		\draw[gray] (10,0)--(10,6);
		\draw[gray] (0,1)--(11,1);
		\draw[gray] (0,2)--(11,2);
		\draw[gray] (0,3)--(11,3);
		\draw[gray] (0,4)--(11,4);
		\draw[gray] (0,5)--(11,5);
		\draw(10,5) node[above right] {1};
		\draw(9,5) node[above right] {1};
		\draw(8,4) node[above right] {2};
		\draw(7,4) node[above right] {2};
		\draw(6,4) node[above right] {1};
		\draw(6,3) node[above right] {2};
		\draw(5,3) node[above right] {3};
		\draw(4,3) node[above right] {1};
		\draw(4,2) node[above right] {2};
		\draw(3,2) node[above right] {2};
		\draw(2,2) node[above right] {1};
		\draw(2,1) node[above right] {1};
		\draw(1,1) node[above right] {1};
		\draw(0,0) node[above right] {1};
	\end{tikzpicture}
	\vspace{-.2in}
	\end{center}
	\caption{$\H\ast\ast(\Gr_2\R^{7,3})$}
\end{figure}

\begin{figure}[ht!]
	\begin{center}
	\begin{tikzpicture}[scale=.4]
		\draw (0,0)--(0,7);
		\draw (0,0)--(13,0);
		\draw[gray] (1,0)--(1,7);
		\draw[gray] (2,0)--(2,7);
		\draw[gray] (3,0)--(3,7);
		\draw[gray] (4,0)--(4,7);
		\draw[gray] (5,0)--(5,7);
		\draw[gray] (6,0)--(6,7);
		\draw[gray] (7,0)--(7,7);
		\draw[gray] (8,0)--(8,7);
		\draw[gray] (9,0)--(9,7);
		\draw[gray] (10,0)--(10,7);
		\draw[gray] (11,0)--(11,7);
		\draw[gray] (12,0)--(12,7);
		\draw[gray] (0,1)--(13,1);
		\draw[gray] (0,2)--(13,2);
		\draw[gray] (0,3)--(13,3);
		\draw[gray] (0,4)--(13,4);
		\draw[gray] (0,5)--(13,5);
		\draw[gray] (0,6)--(13,6);
		\draw(12,6) node[above right] {1};
		\draw(11,5) node[above right] {1};
		\draw(10,5) node[above right] {2};
		\draw(9,5) node[above right] {1};
		\draw(9,4) node[above right] {1};
		\draw(8,4) node[above right] {3};
		\draw(7,4) node[above right] {2};
		\draw(7,3) node[above right] {1};
		\draw(6,4) node[above right] {1};
		\draw(6,3) node[above right] {3};
		\draw(5,3) node[above right] {3};
		\draw(4,3) node[above right] {1};
		\draw(4,2) node[above right] {2};
		\draw(3,2) node[above right] {2};
		\draw(2,2) node[above right] {1};
		\draw(2,1) node[above right] {1};
		\draw(1,1) node[above right] {1};
		\draw(0,0) node[above right] {1};
	\end{tikzpicture}
	\vspace{-.2in}
	\end{center}
	\caption{$\H\ast\ast(\Gr_2\R^{8,3})$}
	\label{fig:283}
\end{figure}

$\H\ast\ast(\Gr_2\R^{9,3})$ has 2 possibilities. 

For $\H\ast\ast(\Gr_3\R^{6,3})$ see Example \ref{ex363} and Section \ref{finishing363}. 

Brute-forcing all possible $E_\infty$ pages for even the most relaxed $E_1$ page of $\H\ast\ast(\Gr_4\R^{8,3})$ appears to be beyond the computing resources readily available to us.

\subsection{$q=4$}

\begin{figure}[ht!]
	\begin{center}
	\begin{tikzpicture}[scale=.4]
		\draw (0,0)--(0,7);
		\draw (0,0)--(13,0);
		\draw[gray] (1,0)--(1,7);
		\draw[gray] (2,0)--(2,7);
		\draw[gray] (3,0)--(3,7);
		\draw[gray] (4,0)--(4,7);
		\draw[gray] (5,0)--(5,7);
		\draw[gray] (6,0)--(6,7);
		\draw[gray] (7,0)--(7,7);
		\draw[gray] (8,0)--(8,7);
		\draw[gray] (9,0)--(9,7);
		\draw[gray] (10,0)--(10,7);
		\draw[gray] (11,0)--(11,7);
		\draw[gray] (12,0)--(12,7);
		\draw[gray] (0,1)--(13,1);
		\draw[gray] (0,2)--(13,2);
		\draw[gray] (0,3)--(13,3);
		\draw[gray] (0,4)--(13,4);
		\draw[gray] (0,5)--(13,5);
		\draw[gray] (0,6)--(13,6);
		\draw(12,6) node[above right] {1};
		\draw(11,6) node[above right] {1};
		\draw(10,6) node[above right] {1};
		\draw(10,5) node[above right] {1};
		\draw(9,5) node[above right] {2};
		\draw(8,5) node[above right] {1};
		\draw(8,4) node[above right] {2};
		\draw(7,4) node[above right] {3};
		\draw(6,4) node[above right] {2};
		\draw(6,3) node[above right] {2};
		\draw(5,3) node[above right] {3};
		\draw(4,3) node[above right] {1};
		\draw(4,2) node[above right] {2};
		\draw(3,2) node[above right] {2};
		\draw(2,2) node[above right] {1};
		\draw(2,1) node[above right] {1};
		\draw(1,1) node[above right] {1};
		\draw(0,0) node[above right] {1};
	\end{tikzpicture}
	\vspace{-.2in}
	\end{center}
	\caption{$\H\ast\ast(\Gr_2\R^{8,4})$}
\end{figure}

\begin{figure}[ht!]
	\begin{center}
	\begin{tikzpicture}[scale=.4]
		\draw (0,0)--(0,8);
		\draw (0,0)--(15,0);
		\draw[gray] (1,0)--(1,8);
		\draw[gray] (2,0)--(2,8);
		\draw[gray] (3,0)--(3,8);
		\draw[gray] (4,0)--(4,8);
		\draw[gray] (5,0)--(5,8);
		\draw[gray] (6,0)--(6,8);
		\draw[gray] (7,0)--(7,8);
		\draw[gray] (8,0)--(8,8);
		\draw[gray] (9,0)--(9,8);
		\draw[gray] (10,0)--(10,8);
		\draw[gray] (11,0)--(11,8);
		\draw[gray] (12,0)--(12,8);
		\draw[gray] (13,0)--(13,8);
		\draw[gray] (14,0)--(14,8);
		\draw[gray] (0,1)--(15,1);
		\draw[gray] (0,2)--(15,2);
		\draw[gray] (0,3)--(15,3);
		\draw[gray] (0,4)--(15,4);
		\draw[gray] (0,5)--(15,5);
		\draw[gray] (0,6)--(15,6);
		\draw[gray] (0,7)--(15,7);
		\draw(14,7) node[above right] {1};
		\draw(13,7) node[above right] {1};
		\draw(12,6) node[above right] {2};
		\draw(11,6) node[above right] {2};
		\draw(10,6) node[above right] {1};
		\draw(10,5) node[above right] {2};
		\draw(9,5) node[above right] {3};
		\draw(8,5) node[above right] {1};
		\draw(8,4) node[above right] {3};
		\draw(7,4) node[above right] {4};
		\draw(6,4) node[above right] {2};
		\draw(6,3) node[above right] {2};
		\draw(5,3) node[above right] {3};
		\draw(4,3) node[above right] {1};
		\draw(4,2) node[above right] {2};
		\draw(3,2) node[above right] {2};
		\draw(2,2) node[above right] {1};
		\draw(2,1) node[above right] {1};
		\draw(1,1) node[above right] {1};
		\draw(0,0) node[above right] {1};
	\end{tikzpicture}
	\vspace{-.2in}
	\end{center}
	\caption{$\H\ast\ast(\Gr_2\R^{9,4})$}
\end{figure}

\newpage
\subsection{$q=5$}

\begin{figure}[ht!]
	\begin{center}
	\begin{tikzpicture}[scale=.4]
		\draw (0,0)--(0,9);
		\draw (0,0)--(17,0);
		\draw[gray] (1,0)--(1,9);
		\draw[gray] (2,0)--(2,9);
		\draw[gray] (3,0)--(3,9);
		\draw[gray] (4,0)--(4,9);
		\draw[gray] (5,0)--(5,9);
		\draw[gray] (6,0)--(6,9);
		\draw[gray] (7,0)--(7,9);
		\draw[gray] (8,0)--(8,9);
		\draw[gray] (9,0)--(9,9);
		\draw[gray] (10,0)--(10,9);
		\draw[gray] (11,0)--(11,9);
		\draw[gray] (12,0)--(12,9);
		\draw[gray] (13,0)--(13,9);
		\draw[gray] (14,0)--(14,9);
		\draw[gray] (15,0)--(15,9);
		\draw[gray] (16,0)--(16,9);
		\draw[gray] (0,1)--(17,1);
		\draw[gray] (0,2)--(17,2);
		\draw[gray] (0,3)--(17,3);
		\draw[gray] (0,4)--(17,4);
		\draw[gray] (0,5)--(17,5);
		\draw[gray] (0,6)--(17,6);
		\draw[gray] (0,7)--(17,7);
		\draw[gray] (0,8)--(17,8);
		\draw(16,8) node[above right] {1};
		\draw(15,8) node[above right] {1};
		\draw(14,8) node[above right] {1};
		\draw(14,7) node[above right] {1};
		\draw(13,7) node[above right] {2};
		\draw(12,7) node[above right] {1};
		\draw(12,6) node[above right] {2};
		\draw(11,6) node[above right] {3};
		\draw(10,6) node[above right] {2};
		\draw(10,5) node[above right] {2};
		\draw(9,5) node[above right] {4};
		\draw(8,5) node[above right] {2};
		\draw(8,4) node[above right] {3};
		\draw(7,4) node[above right] {4};
		\draw(6,4) node[above right] {2};
		\draw(6,3) node[above right] {2};
		\draw(5,3) node[above right] {3};
		\draw(4,3) node[above right] {1};
		\draw(4,2) node[above right] {2};
		\draw(3,2) node[above right] {2};
		\draw(2,2) node[above right] {1};
		\draw(2,1) node[above right] {1};
		\draw(1,1) node[above right] {1};
		\draw(0,0) node[above right] {1};
	\end{tikzpicture}
	\end{center}
	\caption{$\H\ast\ast(\Gr_2\R^{10,5})$}
\end{figure}

\begin{figure}[ht!]
	\begin{center}
	\begin{tikzpicture}[scale=.4]
		\draw (0,0)--(0,10);
		\draw (0,0)--(19,0);
		\draw[gray] (1,0)--(1,10);
		\draw[gray] (2,0)--(2,10);
		\draw[gray] (3,0)--(3,10);
		\draw[gray] (4,0)--(4,10);
		\draw[gray] (5,0)--(5,10);
		\draw[gray] (6,0)--(6,10);
		\draw[gray] (7,0)--(7,10);
		\draw[gray] (8,0)--(8,10);
		\draw[gray] (9,0)--(9,10);
		\draw[gray] (10,0)--(10,10);
		\draw[gray] (11,0)--(11,10);
		\draw[gray] (12,0)--(12,10);
		\draw[gray] (13,0)--(13,10);
		\draw[gray] (14,0)--(14,10);
		\draw[gray] (15,0)--(15,10);
		\draw[gray] (16,0)--(16,10);
		\draw[gray] (17,0)--(17,10);
		\draw[gray] (18,0)--(18,10);
		\draw[gray] (0,1)--(19,1);
		\draw[gray] (0,2)--(19,2);
		\draw[gray] (0,3)--(19,3);
		\draw[gray] (0,4)--(19,4);
		\draw[gray] (0,5)--(19,5);
		\draw[gray] (0,6)--(19,6);
		\draw[gray] (0,7)--(19,7);
		\draw[gray] (0,8)--(19,8);
		\draw[gray] (0,9)--(19,9);
		\draw(18,9) node[above right] {1};
		\draw(17,9) node[above right] {1};
		\draw(16,8) node[above right] {2};
		\draw(15,8) node[above right] {2};
		\draw(14,8) node[above right] {1};
		\draw(14,7) node[above right] {2};
		\draw(13,7) node[above right] {3};
		\draw(12,7) node[above right] {1};
		\draw(12,6) node[above right] {3};
		\draw(11,6) node[above right] {4};
		\draw(10,6) node[above right] {2};
		\draw(10,5) node[above right] {3};
		\draw(9,5) node[above right] {5};
		\draw(8,5) node[above right] {2};
		\draw(8,4) node[above right] {3};
		\draw(7,4) node[above right] {4};
		\draw(6,4) node[above right] {2};
		\draw(6,3) node[above right] {2};
		\draw(5,3) node[above right] {3};
		\draw(4,3) node[above right] {1};
		\draw(4,2) node[above right] {2};
		\draw(3,2) node[above right] {2};
		\draw(2,2) node[above right] {1};
		\draw(2,1) node[above right] {1};
		\draw(1,1) node[above right] {1};
		\draw(0,0) node[above right] {1};
	\end{tikzpicture}
	\end{center}
	\caption{$\H\ast\ast(\Gr_2\R^{11,5})$}
\end{figure}

\newpage
\subsection{$q=6$}

\begin{figure}[ht!]
	\begin{center}
	\begin{tikzpicture}[scale=.4]
		\draw (0,0)--(0,11);
		\draw (0,0)--(21,0);
		\draw[gray] (1,0)--(1,11);
		\draw[gray] (2,0)--(2,11);
		\draw[gray] (3,0)--(3,11);
		\draw[gray] (4,0)--(4,11);
		\draw[gray] (5,0)--(5,11);
		\draw[gray] (6,0)--(6,11);
		\draw[gray] (7,0)--(7,11);
		\draw[gray] (8,0)--(8,11);
		\draw[gray] (9,0)--(9,11);
		\draw[gray] (10,0)--(10,11);
		\draw[gray] (11,0)--(11,11);
		\draw[gray] (12,0)--(12,11);
		\draw[gray] (13,0)--(13,11);
		\draw[gray] (14,0)--(14,11);
		\draw[gray] (15,0)--(15,11);
		\draw[gray] (16,0)--(16,11);
		\draw[gray] (17,0)--(17,11);
		\draw[gray] (18,0)--(18,11);
		\draw[gray] (19,0)--(19,11);
		\draw[gray] (20,0)--(20,11);
		\draw[gray] (0,1)--(21,1);
		\draw[gray] (0,2)--(21,2);
		\draw[gray] (0,3)--(21,3);
		\draw[gray] (0,4)--(21,4);
		\draw[gray] (0,5)--(21,5);
		\draw[gray] (0,6)--(21,6);
		\draw[gray] (0,7)--(21,7);
		\draw[gray] (0,8)--(21,8);
		\draw[gray] (0,9)--(21,9);
		\draw[gray] (0,10)--(21,10);
		\draw(20,10) node[above right] {1};
		\draw(19,10) node[above right] {1};
		\draw(18,10) node[above right] {1};
		\draw(18,9) node[above right] {1};
		\draw(17,9) node[above right] {2};
		\draw(16,9) node[above right] {1};
		\draw(16,8) node[above right] {2};
		\draw(15,8) node[above right] {3};
		\draw(14,8) node[above right] {2};
		\draw(14,7) node[above right] {2};
		\draw(13,7) node[above right] {4};
		\draw(12,7) node[above right] {2};
		\draw(12,6) node[above right] {3};
		\draw(11,6) node[above right] {5};
		\draw(10,6) node[above right] {3};
		\draw(10,5) node[above right] {3};
		\draw(9,5) node[above right] {5};
		\draw(8,5) node[above right] {2};
		\draw(8,4) node[above right] {3};
		\draw(7,4) node[above right] {4};
		\draw(6,4) node[above right] {2};
		\draw(6,3) node[above right] {2};
		\draw(5,3) node[above right] {3};
		\draw(4,3) node[above right] {1};
		\draw(4,2) node[above right] {2};
		\draw(3,2) node[above right] {2};
		\draw(2,2) node[above right] {1};
		\draw(2,1) node[above right] {1};
		\draw(1,1) node[above right] {1};
		\draw(0,0) node[above right] {1};
	\end{tikzpicture}
	\end{center}
	\caption{$\H\ast\ast(\Gr_2\R^{12,6})$}
\end{figure}

\begin{figure}[ht!]
	\begin{center}
	\begin{tikzpicture}[scale=.4]
		\draw (0,0)--(0,12);
		\draw (0,0)--(23,0);
		\draw[gray] (1,0)--(1,12);
		\draw[gray] (2,0)--(2,12);
		\draw[gray] (3,0)--(3,12);
		\draw[gray] (4,0)--(4,12);
		\draw[gray] (5,0)--(5,12);
		\draw[gray] (6,0)--(6,12);
		\draw[gray] (7,0)--(7,12);
		\draw[gray] (8,0)--(8,12);
		\draw[gray] (9,0)--(9,12);
		\draw[gray] (10,0)--(10,12);
		\draw[gray] (11,0)--(11,12);
		\draw[gray] (12,0)--(12,12);
		\draw[gray] (13,0)--(13,12);
		\draw[gray] (14,0)--(14,12);
		\draw[gray] (15,0)--(15,12);
		\draw[gray] (16,0)--(16,12);
		\draw[gray] (17,0)--(17,12);
		\draw[gray] (18,0)--(18,12);
		\draw[gray] (19,0)--(19,12);
		\draw[gray] (20,0)--(20,12);
		\draw[gray] (21,0)--(21,12);
		\draw[gray] (22,0)--(22,12);
		\draw[gray] (0,1)--(23,1);
		\draw[gray] (0,2)--(23,2);
		\draw[gray] (0,3)--(23,3);
		\draw[gray] (0,4)--(23,4);
		\draw[gray] (0,5)--(23,5);
		\draw[gray] (0,6)--(23,6);
		\draw[gray] (0,7)--(23,7);
		\draw[gray] (0,8)--(23,8);
		\draw[gray] (0,9)--(23,9);
		\draw[gray] (0,10)--(23,10);
		\draw[gray] (0,11)--(23,11);
		\draw(22,11) node[above right] {1};
		\draw(21,11) node[above right] {1};
		\draw(20,10) node[above right] {2};
		\draw(19,10) node[above right] {2};
		\draw(18,10) node[above right] {1};
		\draw(18,9) node[above right] {2};
		\draw(17,9) node[above right] {3};
		\draw(16,9) node[above right] {1};
		\draw(16,8) node[above right] {3};
		\draw(15,8) node[above right] {4};
		\draw(14,8) node[above right] {2};
		\draw(14,7) node[above right] {3};
		\draw(13,7) node[above right] {5};
		\draw(12,7) node[above right] {2};
		\draw(12,6) node[above right] {4};
		\draw(11,6) node[above right] {6};
		\draw(10,6) node[above right] {3};
		\draw(10,5) node[above right] {3};
		\draw(9,5) node[above right] {5};
		\draw(8,5) node[above right] {2};
		\draw(8,4) node[above right] {3};
		\draw(7,4) node[above right] {4};
		\draw(6,4) node[above right] {2};
		\draw(6,3) node[above right] {2};
		\draw(5,3) node[above right] {3};
		\draw(4,3) node[above right] {1};
		\draw(4,2) node[above right] {2};
		\draw(3,2) node[above right] {2};
		\draw(2,2) node[above right] {1};
		\draw(2,1) node[above right] {1};
		\draw(1,1) node[above right] {1};
		\draw(0,0) node[above right] {1};
	\end{tikzpicture}
	\end{center}
	\caption{$\H\ast\ast(\Gr_2\R^{13,6})$}
\end{figure}

\newpage

We stop here, as generating all possible outcomes of even the lowest-tension of the 1716 distinct $E_1$ pages of the $\Gr_2\R^{14,7}$ eludes our computing power.


\section{Pushing a bit further}
\label{finishing363}

The algorithm outlined in the Section \ref{section:alg} can only narrow down the value of $\P(\H\ast\ast(\Gr_3(\R^{6,3})))$ to one of the following six possibilities:

\begin{enumerate}[(i)]
	\item {\tiny $x^9y^5 + x^8y^4 + 2x^7y^4 + x^6y^4 + x^5y^5 + 2x^6y^3 + 2x^5y^3 + 3x^4y^2 + 3x^3y^2 + x^2y^2 + x^2y + xy + 1$},
	\item  {\tiny $x^9y^5 + x^8y^4 + 2x^7y^4 + x^6y^4 + 2x^6y^3 + 3x^5y^3 + x^4y^4 + 2x^4y^2 + 3x^3y^2 + x^2y^2 + x^2y + xy + 1,$}
	\item  {\tiny $x^9y^5 + x^8y^4 + 2x^7y^4 + x^6y^4 + 2x^6y^3 + x^5y^4 + 2x^5y^3 + 3x^4y^2 + x^3y^3 + 2x^3y^2 + x^2y^2 + x^2y + xy + 1,$}
	\item  {\tiny $x^9y^5 + x^8y^4 + 2x^7y^4 + x^6y^4 + 2x^6y^3 + 3x^5y^3 + x^4y^3 + 2x^4y^2 + x^3y^3 + 2x^3y^2 + x^2y^2 + x^2y + xy + 1,$}
	\item  {\tiny $x^9y^5 + x^8y^4 + 2x^7y^4 + x^6y^4 + 2x^6y^3 + x^5y^4 + 2x^5y^3 + 3x^4y^2 + 3x^3y^2 + 2x^2y^2 + xy + 1$}, or
	\item  {\tiny $x^9y^5 + x^8y^4 + 2x^7y^4 + x^6y^4 + 2x^6y^3 + 3x^5y^3 + x^4y^3 + 2x^4y^2 + 3x^3y^2 + 2x^2y^2 + xy + 1$.}
\end{enumerate}

However, we can further narrow the possibilities by including as a subspace the smaller Grassmannian $\Gr_3\R^{5,2}\iso \Gr_2\R^{5,2}$ whose cohomology is known, and then quotienting:

\[\Gr_3(\R^{--+++})\inj \Gr_3(\R^{--+++-})\to Q:=\frac{\Gr_3(\R^{--+++})}{\Gr_3(\R^{--+++-})}.\]

Notice that while we do not know the cohomology of $Q$, we can build the first page $E_1(Q)$ of its spectral sequence by simply excluding those summands of $E_1(\Gr_3\R^{--+++-})$ which already appear in $E_1\Gr_3(\R^{--+++})$. Since $\H\ast\ast (Q)$ is some relaxation of $E_1(Q)$, we know $\H\ast\ast(\Gr_3\R^{6,3})$ is some relaxation of $ \H\ast\ast(\Gr_3\R^{5,2})\oplus E_1(Q)$. 

We compute that $\P(\H\ast\ast(\Gr_3\R^{5,2})\oplus E_1Q)=x^9y^5 + x^8y^4 + x^6y^6 + 2x^7y^4 + 2x^6y^3 + 3x^5y^3 + 3x^4y^2 + 3x^3y^2 + x^2y^2 + x^2y + xy + 1$ and learn that
only options (iv) and (vi) above qualify. See Figure \ref{fig:twoposs}.

\begin{figure}[ht]
	\begin{center}
	\begin{tikzpicture}[scale=.4]
		\draw (0,0)--(0,6);
		\draw (0,0)--(10,0);
		\draw[gray] (1,0)--(1,6);
		\draw[gray] (2,0)--(2,6);
		\draw[gray] (3,0)--(3,6);
		\draw[gray] (4,0)--(4,6);
		\draw[gray] (5,0)--(5,6);
		\draw[gray] (6,0)--(6,6);
		\draw[gray] (7,0)--(7,6);
		\draw[gray] (8,0)--(8,6);
		\draw[gray] (9,0)--(9,6);
		\draw[gray] (0,1)--(10,1);
		\draw[gray] (0,2)--(10,2);
		\draw[gray] (0,3)--(10,3);
		\draw[gray] (0,4)--(10,4);
		\draw[gray] (0,5)--(10,5);
		\draw(9,5) node[above right] {1};
		\draw(8,4) node[above right] {1};
		\draw(7,4) node[above right] {2};
		\draw(6,4) node[above right] {1};
		\draw(6,3) node[above right] {2};
		\draw(5,3) node[above right] {3};
		\draw(4,3) node[above right] {1};
		\draw(4,2) node[above right] {2};
		\draw(3,3) node[above right] {1};
		\draw(3,2) node[above right] {2};
		\draw(2,2) node[above right] {1};
		\draw(2,1) node[above right] {1};
		\draw(1,1) node[above right] {1};
		\draw(0,0) node[above right] {1};
		\draw (5,-1) node {(iv)};
		\color{violet}
		\draw (2,1)--(3,1)--(3,2)--(2,2)--(2,1);
	\end{tikzpicture}
	\qquad
	\begin{tikzpicture}[scale=.4]
		\draw (0,0)--(0,6);
		\draw (0,0)--(10,0);
		\draw[gray] (1,0)--(1,6);
		\draw[gray] (2,0)--(2,6);
		\draw[gray] (3,0)--(3,6);
		\draw[gray] (4,0)--(4,6);
		\draw[gray] (5,0)--(5,6);
		\draw[gray] (6,0)--(6,6);
		\draw[gray] (7,0)--(7,6);
		\draw[gray] (8,0)--(8,6);
		\draw[gray] (9,0)--(9,6);
		\draw[gray] (0,1)--(10,1);
		\draw[gray] (0,2)--(10,2);
		\draw[gray] (0,3)--(10,3);
		\draw[gray] (0,4)--(10,4);
		\draw[gray] (0,5)--(10,5);
		\draw(9,5) node[above right] {1};
		\draw(8,4) node[above right] {1};
		\draw(7,4) node[above right] {2};
		\draw(6,4) node[above right] {1};
		\draw(6,3) node[above right] {2};
		\draw(5,3) node[above right] {3};
		\draw(4,3) node[above right] {1};
		\draw(4,2) node[above right] {2};
		\draw(3,2) node[above right] {3};
		\draw(2,2) node[above right] {2};
		\draw(1,1) node[above right] {1};
		\draw(0,0) node[above right] {1};
		\draw (5,-1) node {(vi)};
		\color{violet}
		\draw (2,1)--(3,1)--(3,2)--(2,2)--(2,1);
	\end{tikzpicture}
	\end{center}
	\caption{The two remaining possibilities for $\H\ast\ast(\Gr_3\R^{6,3})$. We will focus on bidegree {\color{violet} $(2,1)$}.}
	\label{fig:twoposs}
\end{figure}

Finally, note that (vi)-(iv)$=x^2yK_{1,1}$, and so we need only determine whether or not this one shift occurs. For this, we again make use of a known cohomology, this time of a (much) larger space, $\Gr_3((\R^{+-})^{\oplus\infty})$, whose cohomology was calculated by Dugger \cite{Dan}. This cohomology begins as shown in Figure \ref{fig:Gr3U}.

\begin{figure}[ht] 
	\begin{center}
	\begin{tikzpicture}[scale=.5]
		\draw (0,0)--(0,6);
		\draw (0,0)--(9,0);
		\draw[gray] (1,0)--(1,6);
		\draw[gray] (2,0)--(2,6);
		\draw[gray] (3,0)--(3,6);
		\draw[gray] (4,0)--(4,6);
		\draw[gray] (5,0)--(5,6);
		\draw[gray] (6,0)--(6,6);
		\draw[gray] (7,0)--(7,6);
		\draw[gray] (8,0)--(8,6);
		\draw[gray] (0,1)--(9,1);
		\draw[gray] (0,2)--(9,2);
		\draw[gray] (0,3)--(9,3);
		\draw[gray] (0,4)--(9,4);
		\draw[gray] (0,5)--(9,5);
		\draw(8,5) node[above right] {6};
		\draw(8,4) node[above right] {4};
		\draw(7,5) node[above right] {2};
		\draw(7,4) node[above right] {6};
		\draw(6,4) node[above right] {4};
		\draw(6,3) node[above right] {3};
		\draw(5,4) node[above right] {1};
		\draw(5,3) node[above right] {4};
		\draw(4,3) node[above right] {2};
		\draw(4,2) node[above right] {2};
		\draw(3,3) node[above right] {1};
		\draw(3,2) node[above right] {2};
		\draw(2,2) node[above right] {1};
		\draw(2,1) node[above right] {1};
		\draw(1,1) node[above right] {1};
		\draw(0,0) node[above right] {1};
		\draw(9,5) node[above right] {$\dots$};
		\draw(9,6) node[above right] {$\reflectbox{$\ddots$}$};
		\color{violet}
		\draw (2,1)--(3,1)--(3,2)--(2,2)--(2,1);
	\end{tikzpicture}
	\end{center}
	\caption{Some of $\H\ast\ast \Gr_3(\R^{+-})^{\oplus\infty}$}
	\label{fig:Gr3U}
\end{figure}

Consider the cofiber sequence

\[\Gr_3\R^{6,3}\xhookrightarrow{i} \Gr_3((\R^{+-})^{\oplus\infty})\to R:=\frac{\Gr_3(\R^{+-})^{\oplus\infty}}{\Gr_3\R^{6,3}}.\]

Note that in singular cohomology $H_{\text{sing}}^2(R)=H_{\text{sing}}^3(R)=0$ and so in the long exact sequence of this cofiber sequence we get $$i^*_{\text{sing}}:H_{\text{sing}}^2\Gr_3(\R^6)\iso H_{\text{sing}}^2(\Gr \R^\infty)\iso \Z_2\oplus \Z_2.$$ Now suppose for a contradiction that $\H21(\Gr_3\R^{6,3})=0$ as in case (vi). Then necessarily $(i^*)^{2,1}=0$ and so $(i^*_{\text{sing}})^2=\psi((i^*)^{2,1})\oplus \psi((i^*)^{2,2})$ cannot be full rank, and we have our contradiction. And so only option (iv) remains. See Figure \ref{fig:363}.

\begin{figure}[ht]
	\begin{center}
	\begin{tikzpicture}[scale=.5]
		\draw (0,0)--(0,6);
		\draw (0,0)--(10,0);
		\draw[gray] (1,0)--(1,6);
		\draw[gray] (2,0)--(2,6);
		\draw[gray] (3,0)--(3,6);
		\draw[gray] (4,0)--(4,6);
		\draw[gray] (5,0)--(5,6);
		\draw[gray] (6,0)--(6,6);
		\draw[gray] (7,0)--(7,6);
		\draw[gray] (8,0)--(8,6);
		\draw[gray] (9,0)--(9,6);
		\draw[gray] (0,1)--(10,1);
		\draw[gray] (0,2)--(10,2);
		\draw[gray] (0,3)--(10,3);
		\draw[gray] (0,4)--(10,4);
		\draw[gray] (0,5)--(10,5);
		\draw(9,5) node[above right] { 1};
		\draw(8,4) node[above right] { 1};
		\draw(7,4) node[above right] { 2};
		\draw(6,4) node[above right] { 1};
		\draw(6,3) node[above right] { 2};
		\draw(5,3) node[above right] { 3};
		\draw(4,3) node[above right] { 1};
		\draw(4,2) node[above right] { 2};
		\draw(3,3) node[above right] { 1};
		\draw(3,2) node[above right] { 2};
		\draw(2,2) node[above right] { 1};
		\draw(2,1) node[above right] { 1};
		\draw(1,1) node[above right] { 1};
		\draw(0,0) node[above right] { 1};
	\end{tikzpicture}
	\end{center}
	\caption{$\H\ast\ast(\Gr_3\R^{6,3})$}
	\label{fig:363}
\end{figure}

\printbibliography

\end{document}